\theoremstyle{plain}
\theoremstyle{definition}
\newtheorem{theorem}{Theorem}[section]
\newtheorem{lemma}[theorem]{Lemma}
\newtheorem{corollary}[theorem]{Corollary}
\newtheorem{note}[theorem]{Note}
\newtheorem{remark}[theorem]{Remark}
\theoremstyle{remark}
\numberwithin{equation}{section}
\newcommand{\SP}{\: \: \: \: \:}
\title[Generalized shifts \& Compact operators]{Is there any nontrivial compact generalized shift operator on Hilbert spaces?}
\author[F. Ayatollah Zadeh Shirazi, F. Ebrahimifar]{Fatemah Ayatollah Zadeh Shirazi, Fatemeh Ebrahimifar}
\begin{document}
\begin{abstract}
In the following text for cardinal number $\tau>0$,  and self--map $\varphi:\tau\to\tau$ we show
the generalized shift operator $\sigma_\varphi(\ell^2(\tau))\subseteq\ell^2(\tau)$ (where
$\sigma_\varphi((x_\alpha)_{\alpha<\tau})=(x_{\varphi(\alpha)})_{\alpha<\tau}$ for
$(x_\alpha)_{\alpha<\tau}\in{\mathbb C}^\tau$) if and only if
$\varphi:\tau\to\tau$ is bounded and in this case $\sigma_\varphi\restriction_{\ell^2(\tau)}:\ell^2(\tau)\to\ell^2(\tau)$
is continuous, consequently $\sigma_\varphi\restriction_{\ell^2(\tau)}:\ell^2(\tau)\to\ell^2(\tau)$ is a compact operator
if and only if $\tau$ is finite.
\end{abstract}
\maketitle
\noindent {\small {\bf 2010 Mathematics Subject Classification:}  46C99 \\
{\bf Keywords:}} Compact operator, Generalized shift, Hilbert space.
\section{Preliminaries}
\noindent The concept of generalized shifts has been introduced for the first time 
in \cite{AHK} as a generalization of one-sided shift
$\mathop{\{1,\ldots,k\}^{\mathbb N}\to\{1,\ldots,k\}^{\mathbb N}}\limits_{
\SP(a_1,a_2,\cdots)\mapsto(a_2,a_3,\cdots)}$
and two-sided shift
$\mathop{\{1,\ldots,k\}^{\mathbb Z}\to\{1,\ldots,k\}^{\mathbb Z}}\limits_{
\SP(a_n)_{n\in\mathbb Z}\mapsto(a_{n+1})_{n\in\mathbb Z}}$~\cite{walters, shift}. 
Suppose $K$ is a nonempty set with at least two elements, $\Gamma$
is a nonempty set, and $\varphi:\Gamma\to\Gamma$ is an arbitrary map, then
we call
\linebreak
$\sigma_\varphi:\mathop{K^\Gamma\to K^\Gamma\SP}\limits_{(x_\alpha)_{\alpha\in\Gamma}
\mapsto(x_{\varphi(\alpha)})_{\alpha\in\Gamma}}$ a generalized shift
(for one-sided and two-sided shifts consider $\varphi(n)=n+1$).
It's evident that for topological space $K$, $\sigma_\varphi:K^\Gamma\to K^\Gamma$
is continuous, where $K^\Gamma$ is equipped by product topology.
\\
For Hilbert space $H$ there exists unique cardinal number $\tau$  such that
$H$ and $\ell^2(\tau)$ are isomorphic~\cite{hilbert1, hilbert2}. All members of the collection 
$\{\ell^2(\tau):\tau$ is a non--zero cardinal number$\}$ are Hilbert spaces, moreover for
cardinal number $\tau$ and $(x_\alpha)_{\alpha<\tau}\in{\mathbb K}^\tau$
(where $\mathbb{K}\in\{\mathbb{R},\mathbb{C}\}$ depending on our
choice for real Hilbert spaces or Complex Hilbert spaces)
we have  $x=(x_\alpha)_{\alpha<\tau}\in\ell^2(\tau)$ if and only if
$||x||^2:=\mathop{\Sigma}\limits_{\alpha<\tau}|x_\alpha|^2<+\infty$.
Moreover for $(x_\alpha)_{\alpha<\tau},(y_\alpha)_{\alpha<\tau}\in\ell^2(\tau)$
let $<(x_\alpha)_{\alpha<\tau},(y_\alpha)_{\alpha<\tau}>=
\mathop{\Sigma}\limits_{\alpha<\tau}x_\alpha\overline{y_\alpha}$
(inner product).
For $\varphi:\tau\to\tau$, one may consider $\sigma_\varphi:\mathbb{K}^\tau\to
\mathbb{K}^\tau$ in particular we may study $\sigma_\varphi\restriction_{\ell^2(\tau)}:\ell^2(\tau)\to
\mathbb{K}^\tau$.
\\
{\bf Convention.} In the following text suppose $\tau>1$ is a cardinal number and
$\varphi:\tau\to\tau$ is arbitrary, we denote  $\sigma_\varphi\restriction_{\ell^2(\tau)}:\ell^2(\tau)\to
\mathbb{K}^\tau$ simply by  $\sigma_\varphi:\ell^2(\tau)\to\mathbb{K}^\tau$,
and equip $\ell^2(\tau)$ with its usual inner product introduced in the above lines.
Also for cardinal number $\psi$ let (for properties of cardinal numbers and their arithmetic see~\cite{cardinal}):
\[\psi^*:=\left\{\begin{array}{lc} \psi & \psi{\rm \: is \: finite \:,} \\ +\infty & {\rm otherwise\:.}
\end{array}\right.\]
Moreover for $s\neq t$ let $\delta_s^t=0$ and $\delta_s^s=1$.
\\
If $X,Y$ are normed vector spaces, we say the linear map $S:X\to Y$ is an operator if
it is continuous. We call $(X,T)$ a linear dynamical system,
if $X$ is a normed vector space and $T:X\to X$ is an operator~\cite{linearchaos}. 
\\
Let's recall that $\mathbb{R}$ is the set of real numbers, $\mathbb{C}$ is the set of complex numbers, and $\mathbb{N}=\{1,2,\ldots\}$ is the set of natural numbers.
\section{On generalized shift operators}
\noindent In this section we show $\sigma_\varphi(\ell^2(\tau))\subseteq\ell^2(\tau)$ (and 
$\sigma_\varphi:\ell^2(\tau)\to\ell^2(\tau)$ is continuous) if and only if 
$\varphi:\tau\to\tau$ is bounded. Moreover
	$\sigma_\varphi(\ell^2(\tau))=\ell^2(\tau)$ if and only if 
	$\varphi:\tau\to\tau$ is  one--to--one.
\begin{remark}
We say $f:A\to A$ is bounded if 
there exists finite $n\geq1$ such that for all $a\in A$ 
we have ${\rm card}(\varphi^{-1}(a))\leq n$~\cite{giordano}.
\end{remark}
\begin{theorem}\label{lem10}
The following statements are equivalent:
\begin{itemize}
\item[1.] $\sigma_\varphi(\ell^2(\tau))\subseteq\ell^2(\tau)$,
\item[2.] $\varphi:\tau\to\tau$ is bounded,
\item[3.] $\sigma_\varphi:\ell^2(\tau)\to\ell^2(\tau)$ is a linear continuous map.
\end{itemize}
Moreover in the above case we have
$||\sigma_\varphi||=\sqrt{\sup\{({\rm card}(\varphi^{-1}(\alpha)))^*:\alpha\in\tau\}}$.
\end{theorem}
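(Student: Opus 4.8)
The plan is to reduce everything to a single computation of $\|\sigma_\varphi(x)\|^2$. Writing $c_\beta:={\rm card}(\varphi^{-1}(\beta))$ for $\beta<\tau$, I would first establish the identity
\[
\|\sigma_\varphi(x)\|^2=\sum_{\alpha<\tau}|x_{\varphi(\alpha)}|^2=\sum_{\beta<\tau}c_\beta\,|x_\beta|^2
\qquad\bigl(x=(x_\alpha)_{\alpha<\tau}\in\ell^2(\tau)\bigr),
\]
with the convention that a term $c_\beta|x_\beta|^2$ equals $+\infty$ when $c_\beta$ is infinite and $x_\beta\neq0$, and equals $0$ when $x_\beta=0$. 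This is obtained by partitioning the index set $\{\alpha<\tau\}$ into the fibres $\varphi^{-1}(\beta)$ and summing the constant $|x_\beta|^2$ over each fibre; since all summands are nonnegative, the (possibly uncountable) sum is unconditionally well defined and this regrouping is legitimate. This identity is the engine that drives all four assertions.

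For the equivalences I would run the cycle $(2)\Rightarrow(3)\Rightarrow(1)\Rightarrow(2)$. For $(2)\Rightarrow(3)$, boundedness gives a finite $n$ with $c_\beta\leq n$ for all $\beta$, so the identity yields $\|\sigma_\varphi(x)\|^2\leq n\|x\|^2$; hence $\sigma_\varphi$ maps $\ell^2(\tau)$ into itself, and since $\sigma_\varphi$ is visibly linear (it only selects and repeats coordinates), it is continuous with $\|\sigma_\varphi\|\leq\sqrt n$. The implication $(3)\Rightarrow(1)$ is immediate from the definition of an operator. The substantive direction is $(1)\Rightarrow(2)$, which I would prove by contraposition: assuming $\varphi$ unbounded, I produce $x\in\ell^2(\tau)$ with $\sigma_\varphi(x)\notin\ell^2(\tau)$. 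If some fibre is infinite, say $c_{\beta_0}=+\infty$, then the unit vector $e_{\beta_0}=(\delta_\gamma^{\beta_0})_{\gamma<\tau}$ already gives $\|\sigma_\varphi(e_{\beta_0})\|^2=c_{\beta_0}=+\infty$. Otherwise all $c_\beta$ are finite but $\sup_\beta c_\beta=+\infty$; then each set $\{\beta:c_\beta\geq k\}$ is infinite (were one of them finite, the $c_\beta$ would be globally bounded, contradicting unboundedness), so I can choose distinct $\beta_1,\beta_2,\dots$ with $c_{\beta_k}\geq k$ and set $x_{\beta_k}=1/k$, all other coordinates $0$. Then $\|x\|^2=\sum_k k^{-2}<\infty$ while the identity gives $\|\sigma_\varphi(x)\|^2\geq\sum_k k\cdot k^{-2}=\sum_k k^{-1}=+\infty$, so $\sigma_\varphi(x)\notin\ell^2(\tau)$.

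For the norm formula, assume we are in the bounded case and set $N:=\sup\{c_\beta^*:\beta<\tau\}$, which is then a finite integer equal to $\max_\beta c_\beta$. The upper bound $\|\sigma_\varphi\|\leq\sqrt N$ follows directly from the identity, since $\|\sigma_\varphi(x)\|^2=\sum_\beta c_\beta|x_\beta|^2\leq N\|x\|^2$. For the matching lower bound, I would pick $\beta_0$ attaining $c_{\beta_0}=N$ (the maximum of a bounded set of nonnegative integers is attained) and test against the unit vector $e_{\beta_0}$: the identity gives $\|\sigma_\varphi(e_{\beta_0})\|^2=c_{\beta_0}=N$ while $\|e_{\beta_0}\|=1$, so $\|\sigma_\varphi\|\geq\sqrt N$. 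Combining, $\|\sigma_\varphi\|=\sqrt N$, which is exactly the claimed value since in the bounded case $c_\beta^*=c_\beta$ for every $\beta$.

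I expect the only real subtlety to be the bookkeeping in the key identity: justifying the regrouping of an uncountable family of nonnegative terms by the fibres of $\varphi$, and handling the convention $(+\infty)\cdot 0=0$ versus $(+\infty)\cdot r=+\infty$ for $r>0$ on the infinite fibres. Once that identity is in place, every remaining step is a short estimate or an explicit choice of test vector, so the argument is essentially the single computation above read in four different ways.
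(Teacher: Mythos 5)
Your proposal is correct and follows essentially the same route as the paper: the fibre-regrouping identity $\|\sigma_\varphi(x)\|^2=\sum_\beta c_\beta|x_\beta|^2$, unit-vector tests to rule out infinite fibres and to get the lower bound on the norm, and the harmonic-series vector $x_{\beta_k}=1/k$ against fibres of size at least $k$ to rule out unbounded finite fibres. The only differences are cosmetic (you run the cycle $(2)\Rightarrow(3)\Rightarrow(1)\Rightarrow(2)$ and select the $\beta_k$ via infinitude of the level sets $\{\beta:c_\beta\geq k\}$, where the paper proves $(1)\Rightarrow(2)\Rightarrow(3)$ and takes a strictly increasing sequence of fibre cardinalities), so the two arguments are in substance identical.
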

\begin{proof}
First note that for $x=(x_\alpha)_{\alpha<\tau}$ we have
\[||\sigma_\varphi(x)||^2=\mathop{\Sigma}\limits_{\alpha<\tau}|x_{\varphi(\alpha)}|^2
=\mathop{\Sigma}\limits_{\alpha<\tau}\left(({\rm card}(\varphi^{-1}(\alpha)))^*|x_\alpha|^2\right)\tag{*}\]
(where $0(+\infty)=(+\infty)0=0$).
\\
``(1) $\Rightarrow$ (2)'' Suppose $\sigma_\varphi(\ell^2(\tau))\subseteq\ell^2(\tau)$, for
$\theta<\tau$ we have
$||(\delta_\alpha^\theta)_{\alpha<\tau}||=1$ and:
\[||\sigma_\varphi((\delta_\alpha^\theta)_{\alpha<\tau})||^2=({\rm card}(\varphi^{-1}(\theta)))^*\]
by (*). Hence 
$(\delta_\alpha^\theta)_{\alpha<\tau}\in\ell^2(\tau)$ and
$\sigma_\varphi((\delta_\alpha^\theta)_{\alpha<\tau})\in\sigma_\varphi(\ell^2(\tau))\subseteq\ell^2(\tau)$,
thus $\varphi^{-1}(\theta)$ is finite. 
\\
Thus $\{{\rm card}(\varphi^{-1}(\alpha)):\alpha\in\tau\}$ is a collection of finite cardinal numbers. If 
\linebreak
$\sup\{({\rm card}(\varphi^{-1}(\alpha)))^*:\alpha\in\tau\}=+\infty$, then there exists
a strictly increasing sequence $\{n_k\}_{k\geq1}$ in $\mathbb N$ and sequence $\{\alpha_k\}_{k\geq1}$
in $\tau$ such that for all $k\geq1$ we have ${\rm card}(\varphi^{-1}(\alpha_k))=n_k$. Since
$\{n_k\}_{k\geq1}$ is a one--to--one sequence, $\{\alpha_k\}_{k\geq1}$ is a one--to--one sequence
too.
Consider  $(x_\alpha)_{\alpha<\tau}$ with:
\[x_\alpha:=\left\{\begin{array}{lc} \frac{1}{k} & \alpha=\alpha_k,k\geq1\:, \\ 0 & {\rm otherwise}\:.
\end{array}\right.\]
Then $\mathop{\Sigma}\limits_{\alpha<\tau}|x_\alpha|^2=
\mathop{\Sigma}\limits_{k\geq1} x_{\alpha_k}^2=\mathop{\Sigma}\limits_{k\geq1}\frac{1}{k^2}<+\infty$
and $(x_\alpha)_{\alpha<\tau}\in\ell^2(\tau)$. On the other hand by (*) we have
$||\sigma_\varphi((x_\alpha)_{\alpha<\tau})||^2=
\mathop{\Sigma}\limits_{k\geq1}\frac{n_k}{k^2}\geq\mathop{\Sigma}\limits_{k\geq1}\frac{1}{k}=+\infty$
(note that $n_k\geq k$ for all $k\geq1$), in particular
$\sigma_\varphi((x_\alpha)_{\alpha<\tau})\notin\ell^2(\tau)$
which leads to the contradiction 
$\sigma_\varphi(\ell^2(\tau))\not\subseteq\ell^2(\tau)$. Therefore
$\sup\{{\rm card}(\varphi^{-1}(\alpha)):\alpha\in\tau\}$ is finite and is a natural number.
\\
``(2) $\Rightarrow$ (3)'' Suppose 
$n:=\sup\{{\rm card}(\varphi^{-1}(\alpha)):\alpha\in\tau\}$ is finite.
For all $x=(x_\alpha)_{\alpha<\tau}\in\ell^2(\tau)$
we have:
\[||\sigma_\varphi(x)||
=\sqrt{\mathop{\Sigma}\limits_{\alpha<\tau}\left(({\rm card}(\varphi^{-1}(\alpha)))^*|x_\alpha|^2\right)}
\leq\sqrt{\mathop{\Sigma}\limits_{\alpha<\tau}\left(n|x_\alpha|^2\right)}=\sqrt{n}||x||\]
which shows continuity of $\sigma_\varphi$ and $||\sigma_\varphi||\leq\sqrt{n}$.
On the other hand, there exists $\theta<\tau$ with
${\rm card}(\varphi^{-1}(\theta)=n$. By $||(\delta_\alpha^\theta)_{\alpha<\tau}||=1$ and (*) we have
$||\sigma_\varphi((\delta_\alpha^\theta)_{\alpha<\tau})||=\sqrt{n}$ which leads to
$||\sigma_\varphi||\geq\sqrt{n}$.
\end{proof}
\noindent By \cite[Lemma 4.1]{set} and \cite{AHK}, $\varphi:\tau\to\tau$ is one--to--one (resp. onto) if and only if 
$\sigma_\varphi:{\mathbb K}^\tau\to{\mathbb K}^\tau$
is onto (resp. one--to--one), however the following lemma deal with 
$\sigma_\varphi:\ell^2(\tau)\to\ell^2(\tau)$.
\begin{lemma}\label{lem20}
The following statements are equivalent:
\begin{itemize}
\item[1.] $\sigma_\varphi(\ell^2(\tau))=\ell^2(\tau)$,
\item[2.] $\sigma_\varphi(\ell^2(\tau))$ is a dense subset of $\ell^2(\tau)$,
\item[3.] $\varphi:\tau\to\tau$ is one--to--one.
\end{itemize}
In addition the following statements are equivalent too:
\begin{itemize}
\item[i.] $\sigma_\varphi:\ell^2(\tau)\to{\mathbb K}^\tau$ is 
one--to--one,
\item[ii.] $\varphi:\tau\to\tau$ is onto.
\end{itemize}
\end{lemma}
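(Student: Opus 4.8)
The plan is to treat the two equivalences separately. For the first one, I would begin by noting that each of (1), (2), (3) already forces $\varphi$ to be bounded: conditions (1) and (2) presuppose $\sigma_\varphi(\ell^2(\tau))\subseteq\ell^2(\tau)$, which by Theorem \ref{lem10} is equivalent to boundedness, while (3) gives $\mathrm{card}(\varphi^{-1}(\alpha))\leq 1$ for every $\alpha<\tau$, hence boundedness with $n=1$. (If $\varphi$ is unbounded all three fail simultaneously, so the equivalence is vacuous there.) Thus whenever any of the three holds, $\sigma_\varphi:\ell^2(\tau)\to\ell^2(\tau)$ is a well-defined continuous linear operator by Theorem \ref{lem10}, and I may work with it as such.

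I would then close the cycle $(1)\Rightarrow(2)\Rightarrow(3)\Rightarrow(1)$. The step $(1)\Rightarrow(2)$ is immediate. For $(2)\Rightarrow(3)$ I would argue by contraposition: if $\varphi$ fails to be one-to-one, choose $\alpha_0\neq\beta_0$ with $\varphi(\alpha_0)=\varphi(\beta_0)$. Then every $y=\sigma_\varphi(x)$ in the image satisfies $y_{\alpha_0}=x_{\varphi(\alpha_0)}=x_{\varphi(\beta_0)}=y_{\beta_0}$, so the image is contained in the kernel of the continuous nonzero functional $y\mapsto y_{\alpha_0}-y_{\beta_0}$, a proper closed subspace, and therefore cannot be dense. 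For $(3)\Rightarrow(1)$, assuming $\varphi$ one-to-one, I would exhibit an explicit preimage of a given $y\in\ell^2(\tau)$: set $x_\gamma=y_\alpha$ when $\gamma=\varphi(\alpha)$ (the index $\alpha$ being unique by injectivity) and $x_\gamma=0$ when $\gamma\notin\varphi(\tau)$. Reindexing along the bijection $\varphi:\tau\to\varphi(\tau)$ yields $\sum_{\gamma<\tau}|x_\gamma|^2=\sum_{\alpha<\tau}|y_\alpha|^2<+\infty$, so $x\in\ell^2(\tau)$, and by construction $\sigma_\varphi(x)=y$.

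The second equivalence is cleaner and needs no boundedness hypothesis, since the codomain here is all of $\mathbb{K}^\tau$. The key observation is that the kernel of $\sigma_\varphi$ consists exactly of those $x$ with $x_{\varphi(\alpha)}=0$ for all $\alpha$, i.e. those $x$ supported off $\varphi(\tau)$. For $(ii)\Rightarrow(i)$: if $\varphi$ is onto then $\varphi(\tau)=\tau$, which forces $x=0$, so $\sigma_\varphi$ is one-to-one. For $(i)\Rightarrow(ii)$ I would again use contraposition: if some $\gamma_0\notin\varphi(\tau)$, then the basis vector $(\delta_\alpha^{\gamma_0})_{\alpha<\tau}$ is a nonzero element of $\ell^2(\tau)$ lying in the kernel, so $\sigma_\varphi$ is not one-to-one.

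The only steps carrying genuine content are $(2)\Rightarrow(3)$ and $(3)\Rightarrow(1)$; everything else reduces to identifying the image or kernel with the (co)kernel of an elementary coordinate functional. I expect the main obstacle to be making the surjectivity construction in $(3)\Rightarrow(1)$ fully rigorous, specifically verifying that the candidate preimage $x$ really lies in $\ell^2(\tau)$. This is exactly where injectivity of $\varphi$ is essential, since it is what turns the defining sum into a faithful reindexing of $\|y\|^2$ rather than an over- or under-count.
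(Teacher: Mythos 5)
Your proposal is correct and follows essentially the same route as the paper's proof: the same cycle $(1)\Rightarrow(2)\Rightarrow(3)\Rightarrow(1)$ with the identical explicit preimage construction (using injectivity of $\varphi$ to reindex $\|x\|=\|y\|$) for $(3)\Rightarrow(1)$, and the same witness $(\delta_\alpha^{\gamma_0})_{\alpha<\tau}$ with $\gamma_0\notin\varphi(\tau)$ for $(i)\Rightarrow(ii)$. The only cosmetic differences are that you package the paper's concrete $\frac14$-estimate against $(\delta_\alpha^\theta)_{\alpha<\tau}$ in $(2)\Rightarrow(3)$ as containment of the image in the proper closed hyperplane $\{y: y_{\alpha_0}=y_{\beta_0}\}$, and that you prove $(ii)\Rightarrow(i)$ directly via the kernel description where the paper cites \cite{set}; both variants are sound.
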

\begin{proof}
``(2) $\Rightarrow$ (3)'' Suppose $\varphi:\tau\to\tau$ 
is not one--to--one, then there exists
$\theta\neq\psi$ with $\mu:=\varphi(\theta)=\varphi(\psi)$. 
There exists
$(y_\alpha)_{\alpha<\tau}\in\ell^2(\tau)$ with $||\sigma_\varphi((y_\alpha)_{\alpha<\tau})-
(\delta_\alpha^\theta)_{\alpha<\tau}||<\frac14$, thus for all $\alpha<\tau$ we have 
$|y_{\varphi(\alpha)}-\delta_\alpha^\theta|<\frac14$ in particular 
$|y_{\varphi(\psi)}-\delta_\psi^\theta|<\frac14$ and $|y_{\varphi(\theta)}-\delta_\theta^\theta|<\frac14$,
thus $|y_{\mu}|<\frac14$ and $|y_{\mu}-1|<\frac14$, which is 
a contradiction, therefore $\varphi:\tau\to\tau$ is one--to--one.
\\
``(3) $\Rightarrow$ (1)'' Suppose $\varphi:\tau\to\tau$ 
is one--to--one, then by
Theorem~\ref{lem10}, $\sigma_\varphi(\ell^2(\tau))\subseteq\ell^2(\tau)$. For
$y=(y_\alpha)_{\alpha<\tau}\in\ell^2(\tau)$, define $x=(x_\alpha)_{\alpha<\tau}$
in the following way:
\[x_\alpha=\left\{\begin{array}{lc} y_\beta & \alpha=\varphi(\beta),\beta<\tau\:, \\
0 & \alpha\in\tau\setminus\varphi(\tau)\:,\end{array}\right.\]
then $||x||=||y||$ and $x\in\ell^2(\tau)$,  moreover $\sigma_\varphi(x)=y$, which leads to
$\sigma_\varphi(\ell^2(\tau))=\ell^2(\tau)$.
\\
In order to complete the proof we should prove that (i) and (ii) are equivalent
however by \cite[Lemma 4.1]{set}, (ii) implies (i), so we should just prove that (i) implies (ii).
\\
``(i) $\Rightarrow$ (ii)'' Suppose $\varphi:\tau\to\tau$ is not onto and choose $\theta\in\tau\setminus
\varphi(\tau)$. 
Then $(\delta_\alpha^\theta)_{\alpha<\tau},(0)_{\alpha<\tau}$ are two distinct elements of
$\ell^2(\tau)$, however \[\sigma_\varphi((\delta_\alpha^\theta)_{\alpha<\tau})=
\sigma_\varphi((0)_{\alpha<\tau})=(0)_{\alpha<\tau}\]
and 
$\sigma_\varphi:\ell^2(\tau)\to{\mathbb K}^\tau$ is not one--to--one.
\end{proof}
\begin{corollary}
The following statements are equivalent:
\begin{itemize}
\item[1.] $\varphi:\tau\to\tau$ is bijective,
\item[2.] $\sigma_\varphi:\ell^2(\tau)\to\ell^2(\tau)$ is bijective,
\item[3.] $\sigma_\varphi:\ell^2(\tau)\to\ell^2(\tau)$ is an isomorphism,
\item[4.] $\sigma_\varphi:\ell^2(\tau)\to\ell^2(\tau)$ is an isometry.
\end{itemize}
\end{corollary}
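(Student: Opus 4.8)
The plan is to funnel every implication through two tools already in hand: the norm identity (*) of Theorem~\ref{lem10}, which reads $\|\sigma_\varphi(x)\|^2=\sum_{\alpha<\tau}({\rm card}(\varphi^{-1}(\alpha)))^*|x_\alpha|^2$, and the one-to-one/onto dictionary of Lemma~\ref{lem20}. A preliminary remark settles well-definedness: statements (2)--(4) each presuppose that $\sigma_\varphi$ actually sends $\ell^2(\tau)$ into $\ell^2(\tau)$, which by Theorem~\ref{lem10} is exactly boundedness of $\varphi$; and since a bijection is in particular one-to-one, all of its fibers are singletons, so (1) itself already forces $\varphi$ bounded and $\sigma_\varphi$ into $\ell^2(\tau)$. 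With this in place I would prove the cycle (1)$\Rightarrow$(3)$\Rightarrow$(2)$\Rightarrow$(1) and then close the loop by establishing (1)$\Leftrightarrow$(4).

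For (1)$\Rightarrow$(4): if $\varphi$ is bijective then ${\rm card}(\varphi^{-1}(\alpha))=1$ for every $\alpha<\tau$, so $({\rm card}(\varphi^{-1}(\alpha)))^*=1$ and (*) collapses to
\[\|\sigma_\varphi(x)\|^2=\sum_{\alpha<\tau}|x_\alpha|^2=\|x\|^2,\]
making $\sigma_\varphi$ an isometry. For (1)$\Rightarrow$(3): bijectivity of $\varphi$ means $\varphi$ is one-to-one, so Lemma~\ref{lem20} (3)$\Rightarrow$(1) gives $\sigma_\varphi(\ell^2(\tau))=\ell^2(\tau)$, while $\varphi$ onto gives injectivity of $\sigma_\varphi$ via Lemma~\ref{lem20} (ii)$\Rightarrow$(i); a norm-preserving linear bijection (the isometry just obtained) has a norm-preserving, hence continuous, inverse, so $\sigma_\varphi$ is an isomorphism. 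The step (3)$\Rightarrow$(2) is immediate since an isomorphism is by definition bijective, and (2)$\Rightarrow$(1) is precisely the two halves of Lemma~\ref{lem20}: surjectivity of $\sigma_\varphi$ forces $\varphi$ one-to-one by (1)$\Leftrightarrow$(3), and injectivity of $\sigma_\varphi$ forces $\varphi$ onto by (i)$\Leftrightarrow$(ii), so $\varphi$ is bijective.

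The one delicate point, and the only place a reader could be misled, is (4)$\Rightarrow$(1): an isometry between Hilbert spaces need not be surjective (the classical unilateral shift witnesses this), so bijectivity of $\varphi$ cannot be read off from distance-preservation by any general principle and must be extracted from the specific structure of $\sigma_\varphi$. The remedy is to test the isometry on the standard unit vectors: from $\|(\delta_\alpha^\theta)_{\alpha<\tau}\|=1$ together with (*) one gets, for every $\theta<\tau$,
\[({\rm card}(\varphi^{-1}(\theta)))^*=\|\sigma_\varphi((\delta_\alpha^\theta)_{\alpha<\tau})\|^2=1,\]
which forces each fiber $\varphi^{-1}(\theta)$ to be a singleton. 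A singleton fiber over every point makes $\varphi$ simultaneously one-to-one and onto, i.e. bijective, yielding (1). This closes all the equivalences, and I expect the unit-vector evaluation to be the crux; the rest is bookkeeping layered on top of Theorem~\ref{lem10} and Lemma~\ref{lem20}.
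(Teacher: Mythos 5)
Your proof is correct, and it rests on the same two pillars as the paper's --- the norm identity (*) from Theorem~\ref{lem10} and the injective/surjective dictionary of Lemma~\ref{lem20} --- but two of your sub-arguments genuinely diverge from the paper's, and in one case improve on it. For (4)$\Rightarrow$(1), the paper argues in two separate steps: an isometry is injective, so Lemma~\ref{lem20} makes $\varphi$ onto; and the operator-norm formula of Theorem~\ref{lem10} gives $1=\|\sigma_\varphi\|^2=\sup\{{\rm card}(\varphi^{-1}(\alpha)):\alpha\in\tau\}$, so every fiber has at most one point and $\varphi$ is one-to-one. Your version evaluates the isometry on each unit vector $(\delta_\alpha^\theta)_{\alpha<\tau}$ to get $({\rm card}(\varphi^{-1}(\theta)))^*=1$ exactly, i.e.\ every fiber is a singleton, which yields injectivity \emph{and} surjectivity of $\varphi$ in one stroke; this is more economical, needing neither Lemma~\ref{lem20} nor the ``Moreover'' clause of Theorem~\ref{lem10} for this step. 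Conversely, for (1)$\Rightarrow$(3) the paper does something your argument misses out on structurally: it identifies the inverse of $\sigma_\varphi$ explicitly as the generalized shift $\sigma_{\varphi^{-1}}$ and applies Theorem~\ref{lem10} to $\varphi^{-1}$ to get continuity of the inverse, whereas you invoke the abstract fact that a surjective linear isometry has an isometric (hence continuous) inverse. Both are valid; the paper's route records the structural fact that the inverse operator is again a generalized shift, while yours is shorter and works for any isometry regardless of its origin. Your preliminary remark on well-definedness (that each of (2)--(4) presupposes $\sigma_\varphi(\ell^2(\tau))\subseteq\ell^2(\tau)$, and that (1) forces boundedness of $\varphi$) is a point the paper leaves implicit, and it is worth stating.
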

\begin{proof}
Using Lemma~\ref{lem20}, (1) and (2) are equivalent. It's evident that (3) implies
(2), moreover if $\varphi:\tau\to\tau$ is bijective, then by Theorem~\ref{lem10} two linear
maps $\sigma_\varphi:\ell^2(\tau)\to\ell^2(\tau)$ and its inverse 
$\sigma_{\varphi^{-1}}:\ell^2(\tau)\to\ell^2(\tau)$ are continuous, hence (1) implies (3).
\\
(1) implies (4),
is evident by (*) in Theorem~\ref{lem10}. 
In order to complete the proof, we should just prove that (4) implies (1).
\\
``(4) $\Rightarrow$ (1)'' Suppose $\sigma_\varphi:\ell^2(\tau)\to\ell^2(\tau)$ is an isometry,
then $\sigma_\varphi:\ell^2(\tau)\to\ell^2(\tau)$ is one--to--one and by Lemma~\ref{lem20},
$\varphi:\tau\to\tau$ is onto. Moreover, $||\sigma_\varphi||=1$ since 
$\sigma_\varphi:\ell^2(\tau)\to\ell^2(\tau)$ is an isometry. By Lemma~\ref{lem10}
we have $1=||\sigma_\varphi||^2=\sup\{{\rm card}(\varphi^{-1}(\alpha)):\alpha\in\tau\}$,
thus for all $\alpha<\tau$ we have ${\rm card}(\varphi^{-1}(\alpha))\leq1$ and
$\varphi:\tau\to\tau$ is one--to--one.
\end{proof}
\begin{lemma}\label{th}
Let $\mathcal{D}=\{z\in\ell^2(\tau):\sigma_\varphi(z)\in\ell^2(\tau)\}$
(consider $\mathcal D$ with induced normed and topology of
$\ell^2(\tau)$), then:
\begin{itemize}
\item[1.] $\mathcal{D}$ is a subspace of $\ell^2(\tau)$,
\item[2.] $\{\theta<\tau:\exists(z_\alpha)_{\alpha<\tau}\in \mathcal{D}\: z_\theta\neq0\}=\{\alpha<\tau:\varphi^{-1}(\alpha)$ is finite $\}$.
\end{itemize}
\end{lemma}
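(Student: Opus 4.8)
The plan is to treat the two parts separately, both resting on the identity~(*) established in Theorem~\ref{lem10}. For part 1, I would verify directly that $\mathcal{D}$ is closed under linear combinations. Recall that $\sigma_\varphi:\mathbb{K}^\tau\to\mathbb{K}^\tau$ is linear and that $\ell^2(\tau)$ is a subspace of $\mathbb{K}^\tau$. Given $z,w\in\mathcal{D}$ and scalars $c,d\in\mathbb{K}$, the combination $cz+dw$ lies in $\ell^2(\tau)$ since $\ell^2(\tau)$ is a subspace, while $\sigma_\varphi(cz+dw)=c\,\sigma_\varphi(z)+d\,\sigma_\varphi(w)$ again lies in $\ell^2(\tau)$ because $\sigma_\varphi(z),\sigma_\varphi(w)\in\ell^2(\tau)$ by the definition of $\mathcal{D}$. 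Hence $cz+dw\in\mathcal{D}$. Equivalently, one may simply observe $\mathcal{D}=\ell^2(\tau)\cap\sigma_\varphi^{-1}(\ell^2(\tau))$ is an intersection of two subspaces of $\mathbb{K}^\tau$.

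For part 2, write $A$ for the left-hand set and $B=\{\alpha<\tau:\varphi^{-1}(\alpha)\text{ is finite}\}$ for the right-hand set, and prove both inclusions from~(*), which reads
\[||\sigma_\varphi(z)||^2=\mathop{\Sigma}\limits_{\alpha<\tau}\left(({\rm card}(\varphi^{-1}(\alpha)))^*|z_\alpha|^2\right)\]
for any $z=(z_\alpha)_{\alpha<\tau}\in\ell^2(\tau)$. For $B\subseteq A$, if $\varphi^{-1}(\theta)$ is finite I would take the unit vector $(\delta_\alpha^\theta)_{\alpha<\tau}$; by~(*) its image has squared norm $({\rm card}(\varphi^{-1}(\theta)))^*={\rm card}(\varphi^{-1}(\theta))<+\infty$, so $(\delta_\alpha^\theta)_{\alpha<\tau}\in\mathcal{D}$ and its $\theta$-th coordinate $1\neq0$ witnesses $\theta\in A$. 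For $A\subseteq B$, if $\theta\in A$ pick $z\in\mathcal{D}$ with $z_\theta\neq0$; since $\sigma_\varphi(z)\in\ell^2(\tau)$ the displayed sum is finite, so in particular the single summand $({\rm card}(\varphi^{-1}(\theta)))^*|z_\theta|^2$ is finite, and because $|z_\theta|^2>0$ this forces $({\rm card}(\varphi^{-1}(\theta)))^*$ to be finite, i.e.\ $\varphi^{-1}(\theta)$ is finite and $\theta\in B$.

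The only real subtlety, rather than a genuine obstacle, is the bookkeeping with the extended value $+\infty$ and the convention $0(+\infty)=(+\infty)0=0$ adopted after~(*). In the $A\subseteq B$ direction I must be careful that a strictly nonzero coordinate $z_\theta$ truly forces the multiplier to be finite: a single infinite term would make $||\sigma_\varphi(z)||^2=+\infty$, contradicting $\sigma_\varphi(z)\in\ell^2(\tau)$. This is precisely why the hypothesis $z_\theta\neq0$ cannot be dropped, and it is the one point where the convention must be invoked with care; everything else is routine.
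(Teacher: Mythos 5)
Your proposal is correct and follows essentially the same route as the paper: part 1 by linearity of $\sigma_\varphi$, and part 2 by testing the vectors $(\delta_\alpha^\theta)_{\alpha<\tau}$ against the identity~(*) for one inclusion, and isolating the $\theta$-th summand of~(*) (the paper phrases this as $||\sigma_\varphi((z_\alpha\delta_\alpha^\theta)_{\alpha<\tau})||\leq||\sigma_\varphi(z)||$ after normalizing $z_\theta=1$, which is the same estimate) for the other. Your closing remark on the convention $0(+\infty)=0$ and why $z_\theta\neq0$ is indispensable correctly identifies the one delicate point.
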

\begin{proof}
Since $\sigma_\varphi:\ell^2(\tau)\to{\mathbb K}^\tau$ is linear, we have immediately (1).
\\
2) We have 
\[||\sigma_\varphi((\delta_\alpha^\theta)_{\alpha<\tau})||=\left\{\begin{array}{lc} \sqrt{({\rm card}(\varphi^{-1}(\theta)))^*} &
\varphi^{-1}(\theta){\rm \: is \: finite,} \\ +\infty & \varphi^{-1}(\theta) {\rm \: is \: infinite.} \end{array}\right.\tag{**}\]
Thus if $\varphi^{-1}(\theta)$ is finite we have $\sigma_\varphi((\delta_\alpha^\theta)_{\alpha<\tau})\in\ell^2(\tau)$
and $(\delta_\alpha^\theta)_{\alpha<\tau}\in{\mathcal D}$, which shows 
$\theta\in\{\beta<\tau:\exists(z_\alpha)_{\alpha<\tau}\in \mathcal{D}\: z_\beta\neq0\}$. Therefore:
\begin{center}
$\{\alpha<\tau:\varphi^{-1}(\alpha)$ is finite $\}\subseteq \{\theta<\tau:\exists(z_\alpha)_{\alpha<\tau}\in \mathcal{D}\: z_\theta\neq0\}$
\end{center}
Now for $\theta<\tau$ suppose there exists $(z_\alpha)_{\alpha<\tau}\in \mathcal{D}$ with $z_\theta\neq0$.
Using the fact that $\mathcal D$ is a subspace of 
$\ell^2(\tau)$ we may suppose $z_\theta=1$, now we have
(since $\sigma_\varphi(z)\in\ell^2(\tau)$): 
\[||\sigma_\varphi((\delta^\theta_\alpha)_{\alpha<\tau})||=||\sigma_\varphi((z_\alpha \delta^\theta_\alpha)_{\alpha<\tau})||\leq ||\sigma_\varphi(z)||<+\infty\:,\]
by (**), $\varphi^{-1}(\theta)$ is finite, which completes the proof of (2).
\end{proof}
\begin{note}\label{note}
For $H\subseteq\tau$ the closure of subspace generated
by $\{(\delta_\alpha^\theta)_{\alpha<\tau}:\theta\in H\}$ (in $\ell^2(\tau)$)
is $\{(x_\alpha)_{\alpha<\tau}\in\ell^2(\tau):\forall\alpha\notin H\:(x_\alpha=0)\}$.
\end{note}
\begin{theorem}
For $\mathcal{D}=\{z\in\ell^2(\tau):\sigma_\varphi(z)\in\ell^2(\tau)\}$ as in Lemma~\ref{th} and 
$M:=\{\alpha<\tau:\varphi^{-1}(\alpha)$ is finite $\}$, the following statemnts are equivalent:
\begin{itemize}
\item[1.] $\sigma_\varphi\restriction_{\mathcal D}:\mathcal{D}\to\ell^2(\tau)$ is continuous,
\item[2.] there exists finite $n\geq1$ with ${\rm card}(\varphi^{-1}(\alpha))\leq n$  for all $\alpha\in M$,
\item[3.] $\mathcal{D}=\{(x_\alpha)_{\alpha<\tau}\in\ell^2(\tau):\forall\theta\notin M\:\: x_\theta=0\}$,
\item[4.] $\mathcal{D}$ is a closed subspace of $\ell^2(\tau)$,
\end{itemize}
\end{theorem}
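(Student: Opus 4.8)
The plan is to prove the cyclic chain of implications (1) $\Rightarrow$ (2) $\Rightarrow$ (3) $\Rightarrow$ (4) $\Rightarrow$ (1). Before starting I would record two facts that are used throughout. First, by Lemma~\ref{th}(2), every $z\in\mathcal{D}$ satisfies $z_\theta=0$ for each $\theta\notin M$; equivalently $\mathcal{D}\subseteq V$, where I abbreviate $V:=\{(x_\alpha)_{\alpha<\tau}\in\ell^2(\tau):\forall\theta\notin M\:\: x_\theta=0\}$. Second, by Note~\ref{note} applied with $H=M$, the set $V$ is a \emph{closed} subspace of $\ell^2(\tau)$. Hence statement (3) is precisely the assertion $\mathcal{D}=V$, the inclusion $\mathcal{D}\subseteq V$ is automatic, and only $V\subseteq\mathcal{D}$ remains to be verified for (3).

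For ``(1) $\Rightarrow$ (2)'' I would assume $\sigma_\varphi\restriction_{\mathcal D}$ is continuous, so there is $C\geq0$ with $||\sigma_\varphi(z)||\leq C||z||$ for all $z\in\mathcal{D}$. For each $\theta\in M$ the vector $(\delta_\alpha^\theta)_{\alpha<\tau}$ lies in $\mathcal{D}$ (since $\varphi^{-1}(\theta)$ is finite), has norm $1$, and by (**) satisfies $||\sigma_\varphi((\delta_\alpha^\theta)_{\alpha<\tau})||=\sqrt{{\rm card}(\varphi^{-1}(\theta))}$. Thus ${\rm card}(\varphi^{-1}(\theta))\leq C^2$ for all $\theta\in M$, and any integer $n\geq1$ with $n\geq C^2$ witnesses (2). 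For ``(2) $\Rightarrow$ (3)'', given such an $n$, take any $x\in V$; the formula (*) reduces to a sum over $M$ alone, because the terms with $\alpha\notin M$ vanish as $x_\alpha=0$ there, so $||\sigma_\varphi(x)||^2=\sum_{\alpha\in M}{\rm card}(\varphi^{-1}(\alpha))|x_\alpha|^2\leq n||x||^2<+\infty$, giving $\sigma_\varphi(x)\in\ell^2(\tau)$ and $x\in\mathcal{D}$. Hence $V\subseteq\mathcal{D}$, which together with $\mathcal{D}\subseteq V$ yields $\mathcal{D}=V$, i.e.\ (3).

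The implication ``(3) $\Rightarrow$ (4)'' is then immediate, since $\mathcal{D}=V$ is closed by Note~\ref{note}. The crux is ``(4) $\Rightarrow$ (1)'', which I would settle with the Closed Graph Theorem. If $\mathcal{D}$ is a closed subspace of the Hilbert space $\ell^2(\tau)$, then $\mathcal{D}$ is itself complete, so $\sigma_\varphi\restriction_{\mathcal D}:\mathcal{D}\to\ell^2(\tau)$ is a linear map between Banach spaces, and it suffices to check that its graph is closed. Suppose $z^{(k)}\to z$ in $\mathcal{D}$ and $\sigma_\varphi(z^{(k)})\to w$ in $\ell^2(\tau)$. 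Norm convergence in $\ell^2(\tau)$ forces coordinatewise convergence, so for each $\beta<\tau$ we have $(\sigma_\varphi(z^{(k)}))_\beta=z^{(k)}_{\varphi(\beta)}\to z_{\varphi(\beta)}=(\sigma_\varphi(z))_\beta$ and simultaneously $(\sigma_\varphi(z^{(k)}))_\beta\to w_\beta$; comparing the two limits gives $w=\sigma_\varphi(z)$. Thus the graph is closed and $\sigma_\varphi\restriction_{\mathcal D}$ is continuous.

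I expect the main obstacle to be recognizing that ``(4) $\Rightarrow$ (1)'' should go through the Closed Graph Theorem rather than through a direct norm estimate: continuity is not visibly an a priori bound here, and it is exactly the completeness of $\mathcal{D}$ (supplied by closedness) that powers the graph argument. The remaining implications are routine consequences of the norm formulas (*) and (**) together with Note~\ref{note}.
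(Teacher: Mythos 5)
Your proof is correct, but it closes the equivalences by a genuinely different route than the paper. The paper never invokes the Closed Graph Theorem: it proves the pairwise equivalences (1)$\Leftrightarrow$(2), (2)$\Leftrightarrow$(3) and (3)$\Leftrightarrow$(4), and its only nontrivial directions are ``(4) $\Rightarrow$ (3)'' (via Note~\ref{note}: since $\{(\delta_\alpha^\theta)_{\alpha<\tau}:\theta\in M\}\subseteq\mathcal{D}$ and $\mathcal{D}$ is closed, $\mathcal{D}$ must contain the closed span $V$) and ``(3) $\Rightarrow$ (2)'', which is handled by an explicit contradiction: if the cardinalities ${\rm card}(\varphi^{-1}(\alpha))$, $\alpha\in M$, were unbounded, one builds $x\in V=\mathcal{D}$ with coordinates $1/k$ at positions $\alpha_k$ where ${\rm card}(\varphi^{-1}(\alpha_k))=n_k\geq k$, so that $||\sigma_\varphi(x)||^2\geq\sum_{k\geq1}1/k=+\infty$ --- the same harmonic-series trick as in Theorem~\ref{lem10}. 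Your cyclic chain (1)$\Rightarrow$(2)$\Rightarrow$(3)$\Rightarrow$(4)$\Rightarrow$(1) shares its first three legs with the paper, but replaces both of the paper's nontrivial directions by the single implication ``(4) $\Rightarrow$ (1)'' via the Closed Graph Theorem, using that $\sigma_\varphi$ is automatically coordinatewise continuous so its graph is closed over any closed (hence complete) domain. What your route buys is economy and a conceptual point --- closedness of the domain alone forces boundedness, with no need to exhibit a divergent series; what the paper's route buys is elementariness: it stays entirely within explicit norm estimates and constructions, avoiding any appeal to Baire-category machinery, which also keeps the argument valid verbatim in a purely hands-on exposition. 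Both arguments are complete and correct; note only that your appeal to the Closed Graph Theorem is legitimate for arbitrary (possibly nonseparable) $\ell^2(\tau)$, since completeness rather than separability is what the theorem requires.
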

\begin{proof} ``(1) $\Rightarrow$ (2)''
Suppose $\sigma_\varphi\restriction_{\mathcal D}:\mathcal{D}\to\ell^2(\tau)$ is continuous,
consider $\theta<\tau$ with finite $\varphi^{-1}(\theta)$. 
By proof of item (2) in Lemma~\ref{th}, 
$(\delta_\alpha^\theta)_{\alpha<\tau}\in{\mathcal D}$ and 
$||\sigma_\varphi((\delta_\alpha^\theta)_{\alpha<\tau})||=\sqrt{({\rm card}(\varphi^{-1}(\theta)))^*}$,
thus 
\[+\infty>||\sigma_\varphi||\geq\sup\{\sqrt{({\rm card}(\varphi^{-1}(\theta)))^*}:\theta\in M\}\:.\]
``(2) $\Rightarrow$ (1)''
For
$n:=\sup\{{\rm card}(\varphi^{-1}(\alpha)):\alpha\in M\}<+\infty$
and $x=(x_\alpha)_{\alpha<\tau}\in{\mathcal D}$
we have $||\sigma_\varphi(x)||
=\sqrt{\mathop{\Sigma}\limits_{\alpha\in M}\left(({\rm card}(\varphi^{-1}(\alpha)))^*|x_\alpha|^2\right)}
\leq\sqrt{\mathop{\Sigma}\limits_{\alpha\in M}\left(n|x_\alpha|^2\right)}=\sqrt{n}||x||$
which shows continuity of $\sigma_\varphi\restriction_{\mathcal D}:\mathcal{D}\to\ell^2(\tau)$.
\\
``(3) $\Rightarrow$ (4)''
Note that for nonempty $M$, $\{(x_\alpha)_{\alpha<\tau}\in\ell^2(\tau):\forall\theta\notin M\:\: x_\theta=0\}$
is just $\ell^2(M)$.
\\
``(4) $\Rightarrow$ (3)''
By proof and notations in Lemma~\ref{th}, we have 
$\{(\delta_\alpha^\theta)_{\alpha<\tau}:\theta\in M\}\subseteq \mathcal D$. Use Note~\ref{note} to complete the proof.
\\
``(2) $\Rightarrow$ (3)''
For
$n:=\sup\{{\rm card}(\varphi^{-1}(\alpha)):\alpha\in M\}<+\infty$
and $x=(x_\alpha)_{\alpha<\tau}\in\ell^2(\tau)$ with $x_\alpha=0$ for all $\alpha\notin M$ 
we have $||\sigma_\varphi(x)||\leq\sqrt{n}||x||$ which shows $x\in{\mathcal D}$ and 
$\{(x_\alpha)_{\alpha<\tau}\in\ell^2(\tau):\forall\theta\notin M\:\: x_\theta=0\}\subseteq\mathcal D$.
Using Lemma~\ref{th} we have
$\mathcal{D}\subseteq\{(x_\alpha)_{\alpha<\tau}\in\ell^2(\tau):\forall\theta\notin M\:\: x_\theta=0\}$.
\\
``(3) $\Rightarrow$ (2)''
If $\sup\{({\rm card}(\varphi^{-1}(\alpha)))^*:\alpha\in M\}=+\infty$, then there exists
a strictly increasing sequence $\{n_k\}_{k\geq1}$ in $\mathbb N$ and sequence $\{\alpha_k\}_{k\geq1}$
in $M$ such that for all $k\geq1$ we have ${\rm card}(\varphi^{-1}(\alpha_k))=n_k$. Using a similar method described in Lemma~\ref{lem10},
consider  $(x_\alpha)_{\alpha<\tau}\in{\mathcal D}$ with:
\[x_\alpha:=\left\{\begin{array}{lc} \frac{1}{k} & \alpha=\alpha_k,k\geq1\:, \\ 0 & {\rm otherwise}\:.
\end{array}\right.\]
Then 
$||\sigma_\varphi((x_\alpha)_{\alpha<\tau})||^2=
\mathop{\Sigma}\limits_{k\geq1}\frac{n_k}{k^2}\geq\mathop{\Sigma}\limits_{k\geq1}\frac{1}{k}=+\infty$, in particular
$\sigma_\varphi((x_\alpha)_{\alpha<\tau})\notin\ell^2(\tau)$
which is in contradiction with
$(x_\alpha)_{\alpha<\tau}\in{\mathcal D}$ and completes the proof.
\end{proof}
\subsection{Compact generalized shift operators}
For normed vector spaces $X,Y$ we say the operator $T:X\to Y$ is a
compact operator, if $\overline{T(B^X(0,1))}$ is a compact subset of $Y$, 
where $B^X(0,1)=\{x\in X:||x||<1\}$~\cite{conway}.
\begin{theorem}
The generalized shift operator $\sigma_\varphi:\ell^2(\tau)\to\ell^2(\tau)$ is compact if and only if $\tau$ is finite.
\end{theorem}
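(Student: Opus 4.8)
The plan is to prove the two implications separately, the finite direction being essentially immediate and the infinite direction requiring a concrete obstruction to compactness. Throughout I use that, for $\sigma_\varphi:\ell^2(\tau)\to\ell^2(\tau)$ to be an operator at all, Theorem~\ref{lem10} forces $\varphi$ to be bounded, so every fiber $\varphi^{-1}(\alpha)$ is finite.

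For the ``if'' direction, suppose $\tau$ is finite. Then every $\varphi:\tau\to\tau$ is bounded, so by Theorem~\ref{lem10} the map $\sigma_\varphi$ is a continuous operator, and $\ell^2(\tau)=\mathbb{K}^\tau$ is finite dimensional. Hence $\overline{\sigma_\varphi(B^{\ell^2(\tau)}(0,1))}$ is a closed and bounded subset of a finite dimensional normed space, so it is compact and $\sigma_\varphi$ is a compact operator.

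For the ``only if'' direction I would argue the contrapositive: assuming $\tau$ is infinite, I exhibit a bounded sequence whose image has no convergent subsequence. Since $\tau=\bigcup_{\alpha\in\varphi(\tau)}\varphi^{-1}(\alpha)$ is infinite and is a union, indexed by $\varphi(\tau)$, of finite sets, the image $\varphi(\tau)$ must be infinite; so I can pick a one--to--one sequence $\{\theta_k\}_{k\geq1}$ in $\varphi(\tau)$, giving $1\leq{\rm card}(\varphi^{-1}(\theta_k))<+\infty$ for each $k$. The key computation is that $\sigma_\varphi((\delta_\alpha^{\theta_k})_{\alpha<\tau})$ is exactly the characteristic vector of the fiber $\varphi^{-1}(\theta_k)$: its $\alpha$--th coordinate is $\delta_{\varphi(\alpha)}^{\theta_k}$, which equals $1$ precisely when $\varphi(\alpha)=\theta_k$. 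The vectors $(\delta_\alpha^{\theta_k})_{\alpha<\tau}$ are orthonormal, and by (*) each image has norm $\sqrt{{\rm card}(\varphi^{-1}(\theta_k))}\geq1$. The crucial point is that for $k\neq j$ the fibers $\varphi^{-1}(\theta_k)$ and $\varphi^{-1}(\theta_j)$ are disjoint (as $\varphi$ is a function and $\theta_k\neq\theta_j$), so their characteristic vectors have disjoint supports and are orthogonal; therefore $||\sigma_\varphi((\delta_\alpha^{\theta_k})_{\alpha<\tau})-\sigma_\varphi((\delta_\alpha^{\theta_j})_{\alpha<\tau})||^2\geq 2$ whenever $k\neq j$, so no subsequence of $\{\sigma_\varphi((\delta_\alpha^{\theta_k})_{\alpha<\tau})\}_{k\geq1}$ is Cauchy.

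It remains to land these images inside $\overline{\sigma_\varphi(B^{\ell^2(\tau)}(0,1))}$: since $(1-\tfrac1m)(\delta_\alpha^{\theta_k})_{\alpha<\tau}\in B^{\ell^2(\tau)}(0,1)$ and $\sigma_\varphi$ is linear and continuous, letting $m\to\infty$ shows $\sigma_\varphi((\delta_\alpha^{\theta_k})_{\alpha<\tau})\in\overline{\sigma_\varphi(B^{\ell^2(\tau)}(0,1))}$. Thus this closure contains a sequence with no convergent subsequence, so it is not (sequentially) compact and $\sigma_\varphi$ is not compact. I expect the only delicate points to be the combinatorial observation that boundedness of $\varphi$ forces $\varphi(\tau)$ to be infinite once $\tau$ is, and the small bookkeeping needed to place the separated sequence in the closure of the image of the \emph{open} unit ball; both become routine once the disjoint--fibers idea is identified.
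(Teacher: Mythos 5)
Your proposal is correct and takes essentially the same route as the paper: both arguments produce infinitely many pairwise disjoint, nonempty, finite fibers (the paper via the partition $\{\varphi^{-1}(\alpha):\alpha<\tau\}\setminus\{\varnothing\}$ of $\tau$, you via the observation that $\varphi(\tau)$ must be infinite) and then use the images of the corresponding standard basis vectors, which are characteristic vectors of disjoint fibers and hence pairwise at distance at least $\sqrt{2}$, to rule out any convergent subsequence in $\overline{\sigma_\varphi(B(0,1))}$. The only cosmetic difference is how you land in that closure --- the paper rescales by $\frac12$ to get inside $\sigma_\varphi(B(0,1))$ directly, while you approximate with factors $1-\frac1m$ --- and your separation bound $\geq\sqrt{2}$ is in fact slightly more careful than the paper's claimed equality $=\sqrt{2}$, which holds only when both fibers are singletons.
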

\begin{proof}
If $\tau$ is infinite, then by Theorem~\ref{lem10}, $\{\varphi^{-1}(\alpha):\alpha<\tau\}\setminus\{\varnothing\}$
is a partition of $\tau$ to its finite subsets, thus there exists a one--to--one sequence $\{\alpha_n\}_{n\geq1}$
in $\tau$ such that $\{\varphi^{-1}(\alpha_n\}_{n\geq1}$ is a sequence of nonempty finite
and disjoint subsets of $\tau$. For all distinct $n,m\geq1$ we have 
\[||\sigma_\varphi((\delta_\alpha^{\alpha_n})_{\alpha<\tau})-
\sigma_\varphi((\delta_\alpha^{\alpha_m})_{\alpha<\tau})||=\sqrt{2}\]
so $\{\sigma_\varphi(\frac12(\delta_\alpha^{\alpha_n})_{\alpha<\tau})\}_{n\geq1}$ (is a sequence in
$\overline{\sigma_\varphi(B((0)_{\alpha<\tau},1))}$) without any converging subsequence.
Therefore $\overline{\sigma_\varphi(B((0)_{\alpha<\tau},1))}$ is not compact and 
$\sigma_\varphi:\ell^2(\tau)\to\ell^2(\tau)$ is not a compact operator.
\\
On the other hand, if $\tau$ is finite, then every linear operator on $\ell^2(\tau)$
is compact, hence $\sigma_\varphi:\ell^2(\tau)\to\ell^2(\tau)$ is a compact operator.
\end{proof}

\noindent {\scriptsize {\bf Fatemah Ayatollah Zadeh Shirazi},
Faculty of Mathematics, Statistics and Computer Science,
College of Science, University of Tehran ,
Enghelab Ave., Tehran, Iran
({\it e-mail}: fatemah@khayam.ut.ac.ir)
\\
{\bf Fatemeh Ebrahimifar},
Faculty of Mathematics, Statistics and Computer Science,
College of Science, University of Tehran ,
Enghelab Ave., Tehran, Iran
({\it e-mail}: ebrahimifar64@ut.ac.ir)}
\end{document}